\newtheorem{theorem}{Theorem}
\theoremstyle{plain}
\newtheorem{lemma}[theorem]{Lemma}
\newtheorem{cor}[theorem]{Corollary}
\theoremstyle{definition}
\newtheorem{definition}[theorem]{Definition}
\theoremstyle{remark}
\newtheorem{rmk}[theorem]{Remark}
\newcommand{\R}{\mathbb{R}}
\newcommand{\Z}{\mathbb{Z}}
\newcommand{\N}{\mathbb{N}}
\begin{document}


\title{Counting paths in corridors using circular {P}ascal arrays}

\author[vsu]{Shaun V. Ault\corref{cor1}} 
\ead{svault@valdosta.edu}

\author[vsu]{Charles Kicey} 
\ead{ckicey@valdosta.edu}

\cortext[cor1]{Corresponding author}

\address[vsu]{Department of Mathematics and Computer Science, Valdosta
  State University, 1500 N. Patterson St., Valdosta, GA, 31698}

\begin{abstract}
  A circular Pascal array is a periodization of the familiar Pascal's
  triangle.  Using simple operators defined on periodic sequences, we
  find a direct relationship between the ranges of the circular Pascal
  arrays and numbers of certain lattice paths within corridors, which
  are related to Dyck paths. This link provides new, short proofs of
  some nontrivial formulas found in the lattice-path literature.
\end{abstract}

\begin{keyword}
  lattice path \sep corridor path \sep binomial coefficient
  \sep discrete linear operator
\end{keyword}

\maketitle

\section{Circular Pascal Arrays and Corridor Paths}

\subsection{Circular Pascal Arrays} 
We begin by defining the {\it circular Pascal arrays} (one for each
integer $d \geq 2$) and explore some of their amazing properties.  By
{\it Pascal array}, we mean something a little more general than the
familiar Pascal's triangle.  In what follows, we interpret the
binomial coefficient $\binom{n}{k}$ as the coefficient of $x^k$ in the
expansion,
\[
  (1+x)^n = \sum_{k\in \Z} \binom{n}{k} x^k,
\]
where we understand $\binom{n}{k} = 0$ if $k < 0$ or $k > n$.  We also
use the convention that $\N$ denotes the set of {\it non-negative
  integers}, $\{0, 1, 2, 3, \ldots\}$.

\begin{definition}
  The {\bf Pascal array} is the array whose row $n$, column $k$ entry
  is equal to $\binom{n}{k}$ where $n \in \N$, $k \in \Z$.
  \[
    \begin{array}{l|lllllllll}
      n \setminus k & \ldots & -1 & 0 & 1 & 2 & 3 & 4 & 5 & \ldots
      \\ \hline 0 & \ldots & 0 & 1 & 0 & 0 & 0 & 0 & 0 & \ldots \\ 1 &
      \ldots & 0 & 1 & 1 & 0 & 0 & 0 & 0 & \ldots \\ 2 & \ldots & 0 &
      1 & 2 & 1 & 0 & 0 & 0 & \ldots \\ 3 & \ldots & 0 & 1 & 3 & 3 & 1
      & 0 & 0 & \ldots \\ 4 & \ldots & 0 & 1 & 4 & 6 & 4 & 1 & 0 &
      \ldots \\ 5 & \ldots & 0 & 1 & 5 & 10 & 10 & 5 & 1 & \ldots
      \\ \vdots && \vdots & \vdots & \vdots & \vdots & \vdots & \vdots
      & \vdots
    \end{array}
  \]
\end{definition}
Recall that for any $n > 0$, entry $(n, k)$ of the array can be found
by the familiar formula,
\begin{equation}\label{eqn.Pascal_formula}
  \binom{n}{k} = \binom{n-1}{k-1} + \binom{n-1}{k}.
\end{equation}

\begin{definition}\label{def.circularPascal}
  Fix an integer $d \geq 2$.  The {\bf circular Pascal array of order
    $d$} is the array whose row $n$, column $k$ entry,
  $\sigma^{(d)}_{n,k}$ (or just $\sigma_{n,k}$ when the context is
  clear), is the (finite) sum:
  \begin{equation}\label{eqn.circular_Pascal_formula}
    \sigma_{n,k} = \sigma^{(d)}_{n,k} = \sum_{j \in \Z} \binom{n}{k +
      dj},
  \end{equation}
  where $n \in \N$ and $k \in \Z$.  
\end{definition}

In what follows, we will be interested in {\bf periodic} sequences and
arrays of numbers.  To be precise, we say a sequence $(a_k)_{k \in
  \Z}$ is {\it periodic}, of period $d$, if $a_{k + dm} = a_k$ for
every $m \in \Z$.  Clearly, for fixed $n \geq 0$, the sequence
$(\sigma^{(d)}_{n,k})_{k \in \Z}$ as defined
in~(\ref{eqn.circular_Pascal_formula}) is periodic of period $d$.
Furthermore, it is easy to see that the entries of the circular Pascal
array satisfy~(\ref{eqn.Pascal_formula}), in the sense that for $n >
0$,
\begin{equation}\label{eqn.recursiveCircularPascal}
  \sigma_{n,k} = \sigma_{n-1,k-1} + \sigma_{n-1,k}.
\end{equation}

For $ d = 5 $, our collection of periodic sequences is indicated below.
\[
  \begin{array}{l|l|lllll|lllll|lllll|l|}
    n \setminus k & \cdots & -5 & -4 & -3 & -2 & -1 & 0 & 1 & 2 & 3 &
    4 & 5 & 6 & 7 & 8 & 9 & \cdots \\ \hline 0 &\cdots & 1 & 0 & 0 & 0
    & 0 & 1 & 0 & 0 & 0 & 0 & 1 & 0 & 0 & 0 & 0 & \cdots \\ 1 &\cdots
    & 1 & 1 & 0 & 0 & 0 & 1 & 1 & 0 & 0 & 0 & 1 & 1 & 0 & 0 & 0 &
    \cdots \\ 2 &\cdots & 1 & 2 & 1 & 0 & 0 & 1 & 2 & 1 & 0 & 0 & 1 &
    2 & 1 & 0 & 0 & \cdots \\ 3 &\cdots & 1 & 3 & 3 & 1 & 0 & 1 & 3 &
    3 & 1 & 0 & 1 & 3 & 3 & 1 & 0 & \cdots \\ 4 &\cdots & 1 & 4 & 6 &
    4 & 1 & 1 & 4 & 6 & 4 & 1 & 1 & 4 & 6 & 4 & 1 & \cdots \\ 5
    &\cdots & 2 & 5 & 10 & 10 & 5 & 2 & 5 & 10 & 10 & 5 & 2 & 5 & 10 &
    10 & 5 & \cdots \\ 6 &\cdots & 7 & 7 & 15 & 20 & 15 &7 & 7 & 15 &
    20 & 15 & 7 & 7 & 15 & 20 & 15 & \cdots \\ 7 &\cdots & 22 & 14 &
    22 & 35 & 35 &22 & 14 & 22 & 35 & 35 & 22 & 14 & 22 & 35 & 35 &
    \cdots \\ 8 &\cdots & 57 & 36 & 36 & 57 & 70 &57 & 36 & 36 & 57 &
    70 & 57 & 36 & 36 & 57 & 70 & \cdots \\ 9 &\cdots & 127 & 93 & 72
    & 93 & 127 &127 & 93 & 72 & 93 & 127 & 127 & 93 & 72 & 93 & 127 &
    \cdots \\ \vdots & \vdots & \vdots & \vdots & \vdots & \vdots &
    \vdots & \vdots & \vdots & \vdots & \vdots & \vdots & \vdots &
    \vdots & \vdots & \vdots & \vdots & \vdots
  \end{array}
\]
Henceforth, we will represent a circular Pascal array by showing only
columns $0, 1, \ldots, d-1$.

While studying a related problem called the Sharing Problem, Charles
Kicey, Katheryn Klimko, and Glen Whitehead\cite{KK2011} noticed that
the circular Pascal array has surprising connections to well-known
sequences for small values of $d$.  Consider the case $d=2$ (shown
below, along with $d=3$ and $d=4$).  Starting in row $n=1$,
$\sigma_{n,k} = 2^{n-1}$ for $k = 0, 1$.  Of course, this reflects a
well-known property of Pascal's triangle: $ \sum_{j\in \Z}
\binom{n}{2j} = \sum_{j \in \Z} \binom{n}{2j+1}$, if $n \geq 1$.
Another way to state this result is to say that the {\it range}
(difference between maximum and minimum values) of the $n^{th}$ row is
$0$ for $n \geq 1$ in the circular Pascal array of order $2$. The
cases $d = 3, 4$ are interesting as well -- for $d=3$, the ranges are
constantly $1$, while for $d=4$, the range of row $n$ is $2^{\lfloor
  n/2 \rfloor}$ -- but the most surprising case is, perhaps, $d=5$.
These ranges form the Fibonacci sequence, as was proved
in~\cite{KK2011}.  Now Fibonacci numbers are no strangers to the
Pascal's triangle; indeed the sequence of diagonal sums, $f_n =
\sum_{j \in \Z} \binom{n-j}{j}$, is easily shown to be the Fibonacci
sequence.  However, this new manifestation of the Fibonacci numbers in
the circular Pascal array of order $5$ was quite unexpected.
\[
  \begin{array}{l|ll|l}
    d=2\\ n \setminus k & 0 & 1 & \textrm{Range}\\ \hline 0 & 1 & 0 &
    1\\ 1 & 1 & 1 & 0\\ 2 & 2 & 2 & 0\\ 3 & 4 & 4 & 0\\ 4 & 8 & 8 &
    0\\ 5 & 16 & 16 & 0\\ 6 & 32 & 32 & 0\\ \vdots & \vdots & \vdots &
    \vdots
  \end{array}
  \quad
  \begin{array}{l|lll|l}
    d=3\\
    n \setminus k &  0 &  1 & 2 & \textrm{Range}\\
    \hline
     0 & 1 &  0  & 0 & 1\\
     1 & 1 &  1  & 0 & 1\\
     2 & 1 &  2 & 1 & 1\\
     3 & 2 &  3 & 3 & 1\\
     4 & 5 & 5 & 6 & 1\\
     5 & 11 &  10 & 11 & 1\\
     6 & 22 &  21 & 21 & 1\\
     \vdots & \vdots & \vdots & \vdots & \vdots
  \end{array}
  \quad
  \begin{array}{l|llll|l}
    d=4 \\
    n \setminus k &  0 &  1 & 2 & 3 & \textrm{Range}\\
    \hline
     0 & 1 &  0  & 0 & 0 & 1\\
     1 & 1 &  1  & 0 & 0 & 1\\
     2 & 1 &  2 & 1 & 0 & 2\\
     3 & 1 &  3 & 3 & 1 & 2\\
     4 & 2 & 4 & 6 & 4 & 4\\
     5 & 6 &  6 & 10 & 10 & 4\\
     6 & 16 &  12 & 16 & 20& 8 \\
     \vdots & \vdots & \vdots &\vdots & \vdots & \vdots
  \end{array}
\]
\[
  \begin{array}{l|lllll|l}
    d=5  \\
    n \setminus k &      0 &  1 &  2 &  3 &  4 & \textrm{Range}\\
    \hline
       0 & 1 &  0 &  0 &  0 &  0 & 1\\
       1 & 1 &  1 &  0 &  0 &  0 & 1\\
       2 & 1 &  2 &  1 &  0 &  0 & 2\\
       3 & 1 &  3 &  3 &  1 &  0 & 3\\
       4 & 1 &  4 &  6 &  4 &  1 & 5\\
       5 & 2 &  5 & 10 & 10 &  5 & 8\\
       6 & 7 &  7 & 15 & 20 &  15 & 13\\
       7 & 22 & 14 & 22 & 35 & 35 & 21\\
       8 & 57 & 36 & 36 & 57 & 70 & 34\\
       9 & 127 & 93 & 72 & 93 & 127 & 55\\
       \vdots & \vdots & \vdots & \vdots & \vdots & \vdots & \vdots 
  \end{array}
\]

So the search began to find other well-known sequences in the
$d$-circular Pascal array.  Another of Charles Kicey's students,
Jonathon Bryant, explored the sequences of ranges for larger values of
$d$ using the OEIS~\cite{OEIS} and found that they were known but in
different contexts.  The OEIS entry for the $d=9$ case (A061551) gives
a tantalizing clue that all of these number sequences are indeed
related.  The main description of A061551 is: ``number of paths along
a corridor width 8, starting from one side,'' and further down the
page, the following note can be found.

\begin{quote}
  Narrower corridors effectively produce A000007, A000012, A016116,
  A000045, A038754, A028495, A030436. An infinitely wide corridor
  (i.e. just one wall) would produce A001405.
\end{quote}

The main result of this paper is to prove that the corridor numbers
are indeed the same as our sequences of ranges of circular Pascal
arrays.  First we define the corridor numbers precisely.


\subsection{Corridor Paths}

For a fixed number $m \geq 0$, the {\bf $m$-corridor} is set of
lattice points $(a, b) \in \N \times \{0, 1, \ldots m\}$.  We will
show that the ranges of circular Pascal arrays of order $m+2$ coincide
with the number of $m$-{\it corridor paths} beginning at the origin.
However, we find it useful to consider paths within an $m$-corridor
that may begin at an arbitrary point, since it makes the arguments no
more difficult and provides a link to a more general formula found in
the combinatorics literature (see \S\ref{sub.Dyck_K-M} below).  To be
precise, a {\bf corridor path} is a path in the $m$-corridor
satisfying the following rules:
\begin{enumerate}
  \item The initial point of the path is at $(0, y_0)$ for some chosen
    $y_0$ with $0 \leq y_0 \leq m$.
  \item The path never leaves the corridor.
  \item Each step in the path is either an up-and-right or
    down-and-right move.
\end{enumerate}
An example corridor path is shown in Fig.~\ref{fig.corridor_path}.

\begin{figure}[hb]
  \begin{center}
    \psset{unit=0.5cm}
    \begin{pspicture}(0,-1)(10,3)
      \psaxes(0,0)(0,0)(10,3)
      \psline[linestyle=dashed](0,2)(10,2)
      \psdots(0,0)(1,1)(2,2)(3,1)(4,2)(5,1)(6,0)(7,1)(8,2)
      \psline(0,0)(1,1)
      \psline(1,1)(2,2)
      \psline(2,2)(3,1)
      \psline(3,1)(4,2)
      \psline(4,2)(5,1)
      \psline(5,1)(6,0)
      \psline(6,0)(7,1)
      \psline(7,1)(8,2)
    \end{pspicture}
  \end{center}
\caption{A path of length $8$ in the $2$-corridor with $y_0 = 0$.}
\label{fig.corridor_path}
\end{figure}
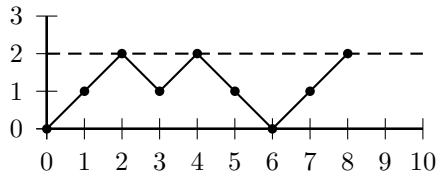

\begin{definition}
  For a fixed $m \geq 0$, the sequence of {\bf corridor numbers of
    order $m$}, $(c^{(m)}_{n})_{n \in \N}$ (or just $(c_{n})$ when the
  context is clear), counts the number of paths of length $n$,
  starting at the origin, in the $m$-corridor.  If we wish to count
  the number of corridor paths starting at $(0, y_0)$, then we may
  write $c^{(m)}_{n, y_0}$ (or $c_{n,y_0}$).
\end{definition}

\begin{rmk}
  Corridor numbers are useful in graph theory as $c^{(m)}_{n}$ counts
  the number of length $n$ paths in the path graph $P_{m+1}$ that
  start at the initial node of the graph, while $c^{(m)}_{n,y_0}$
  counts the number of such paths that start at node $y_0 + 1$.
\end{rmk}

\subsection{Dyck Paths and K-M Paths}\label{sub.Dyck_K-M}

Corridor paths are certain types of {\it lattice paths}.  Indeed, they
may be identified with a variation of {\it Dyck paths}.  Recall, a
Dyck path of order $m$ is a monotonic lattice path from the origin to
the point $(m,m)$ that does not cross the diagonal line $y=x$.  Note
that an order $m$ Dyck path has length $2m$.  For our purposes, we
will assume the path is drawn above the line $y=x$, as in
Fig.~\ref{fig.dyck}. It is well known that the number of Dyck paths of
order $m \geq 0$ is equal to the $m^{th}$ Catalan number,
$C_m$~\cite{Brualdi}.  We will consider the following variation of
Dyck paths found in Krattenthaler-Mohanty~\cite{KraMoh}:

\begin{definition}\label{def.KM}
  Let $s, t \in \Z$ such that $t \geq 0 \geq s$ and $a, b \in \Z$ such
  that $a+t \geq b \geq a+s$.  A {\bf K-M path} is a monotonic lattice
  path from the origin to $(a, b)$ that does not cross either of the
  lines $y=x+s$ or $y = x+t$.  The number of such K-M paths is denoted
  $D(a, b; s, t)$.  If $b > a+t$ or $b < a+s$, then define $D(a, b; s,
  t) = 0$.
\end{definition}

There is an affine transformation taking K-M paths to corridor paths.
With $a, b, s, t$ as in Definition~\ref{def.KM}, map the point $(a, b)
\mapsto (a+b, b-a-s)$.  Then the line $y=x+s$ maps to the $x$-axis,
the line $y=x+t$ maps to the line $y = t-s$, and the origin (the
initial point for all K-M paths) maps to $(0, -s)$.  Thus, with $m =
t-s$ and $y_0 = -s$, the result is the $m$-corridor with initial point
$(0, y_0)$.  When $s = 0$ and $t = m$, we recover the corridor numbers
of order $m$:
\begin{equation}
  c_n = \sum_{a+b=n} D(a,b; 0, m).
\end{equation}
That is, the number of length $n$ paths in the $m$-corridor is equal
to the number of K-M paths of length $n$, lying between $y=x$ and
$y=x+m$ (Compare Fig.~\ref{fig.restricted_dyck} and
Fig.~\ref{fig.corridor_path}, for example).

\begin{figure}[!ht]
  \begin{center}
    \psset{unit=0.5cm}
    \begin{pspicture}(0,-1)(6,6)
      \psaxes(0,0)(0,0)(6,6) \psline[linestyle=dashed](0,0)(6,6)
      \psdots(0,0)(0,1)(1,1)(1,2)(1,3)(1,4)(2,4)(2,5)(3,5)(4,5)(5,5)
      \psline(0,0)(0,1) \psline(0,1)(1,1) \psline(1,1)(1,2)
      \psline(1,2)(1,3) \psline(1,3)(1,4) \psline(1,4)(2,4)
      \psline(2,4)(2,5) \psline(2,5)(3,5) \psline(3,5)(4,5)
      \psline(4,5)(5,5)
    \end{pspicture}
  \end{center}
\caption{A Dyck path of order $5$.}
\label{fig.dyck}
\end{figure}
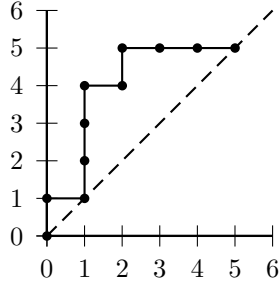

\begin{figure}[!ht]
  \begin{center}
    \psset{unit=0.5cm}
    \begin{pspicture}(0,-1)(5,6)
      \psaxes(0,0)(0,0)(5,6) 
      \psline[linestyle=dashed](0,0)(6,6)
      \psline[linestyle=dashed](0,2)(4,6)
      \psdots(0,0)(0,1)(0,2)(1,2)(1,3)(2,3)(3,3)(3,4)(3,5)
      \psline(0,0)(0,1) \psline(0,1)(0,2) \psline(0,2)(1,2)
      \psline(1,2)(1,3) \psline(1,3)(2,3) \psline(2,3)(3,3)
      \psline(3,3)(3,4) \psline(3,4)(3,5)
    \end{pspicture}
  \end{center}
\caption{A K-M path of length 8 ending at (3,5)}
\label{fig.restricted_dyck}
\end{figure}
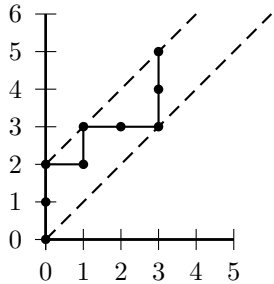

These types of lattice paths and other variations have been studied
extensively in the combinatorics literature (see, for
example,~\cite{KraMoh,Mohanty,Narayana}).  Krattenthaler and Mohanty
give a formula involving sums of binomial coefficients which looks
quite related to our definition of the circular Pascal
array~(\ref{def.circularPascal}).
\begin{equation}\label{eqn.KraMoh_formula}
  D(a,b;s, t) = \sum_{k \in \Z} \left( \binom{a+b}{a-k(t-s+2)} -
  \binom{a+b}{a-k(t-s+2) + t + 1}\right).
\end{equation}
\begin{rmk}
  This formula goes back to~\cite{Mohanty}, \S1.3, Thm.~2. Also
  see~\cite{Takacs}, and {\it Bertrand's ballot problem}.
\end{rmk}

In the next section, we prove the connection between circular Pascal
arrays and corridor numbers, and this will lead to a simpler
derivation of~(\ref{eqn.KraMoh_formula}) than the proofs currently
found in the literature, to our knowledge.


\section{The Main Result}

\subsection{Shift and Difference Operators}
Fix an integer $d \geq 2$.  In what follows we work in the vector
space $ \R^{\infty} $ of real-valued sequences indexed by the set of
integers $ \Z $.  All of our vectors will be periodic (in particular,
with period $d$ or $2d$).  Let $ I $ be the identity operator on
$\R^{\infty}$.  For $ {\bf{x}} = ( x_k )_{k \in \Z} \in \R^{\infty}$,
we will make use of the right shift operator defined by $R ({\bf{x}})
= ( x_{k-1} )_{k \in \Z} $.  Let $ L = R^{-1} $. Note that any powers
or $ L $ and $ R $ would, of course, commute.  Let us introduce a
difference operator $D$ on $ \R^{\infty} $ by $ D = I - L $ (a
negative of a ``discrete deriviative"). The role of $D$ will become
clear as we search for minimum and maximum values in our Pascal
arrays.  Define a periodic ``unit vector,'' ${\bf e_{0}}$, by:
\begin{equation}\label{eqn.e}
  {\bf e}_0 = (\dots, 0, 1, \underbrace{0, \ldots, 0}_{d-1
    \;\textrm{zeros}}, {\bf{1}}, \underbrace{0 , \dots, 0}_{d-1
    \;\textrm{zeros}}, 1, 0, \ldots ).
\end{equation}

From these building blocks alone, we can analyze the circular Pascal
array as well as our corridor numbers.  Note that the recursive
formula~(\ref{eqn.recursiveCircularPascal}) can be encoded by the
operator $I + R$.  In particular, if $\boldsymbol\sigma_n =
(\sigma_{n,k} )_{k \in \Z}$ is the $n^{th}$ row of the circular Pascal
array of order $d$, then for $n \geq 1$, we have:
\begin{equation}\label{eqn.vectorPascal}
  \boldsymbol\sigma_n = (I + R)\boldsymbol\sigma_{n-1} = (I +
  R)^n\boldsymbol\sigma_{0},
\end{equation}
with the initial vector $\boldsymbol\sigma_{0} = {\bf e}_0$.  However,
we will find it useful to allow more general initial vectors
$\boldsymbol\sigma_0$ in the analysis below.

\begin{rmk}
  Here and throughout, as an aid to the reader, we will bold-face the
  entry corresponding to $k = 0$ for numerical vectors in $
  \R^{\infty} $ (unless it is clear from context).
\end{rmk}

\subsection{Up-Sampling}

We also introduce a variation of the {\it up-sample} operator of
digital signal processing~\cite{Strang}.  For $ {\bf x} = ( x_k )_{k
  \in \Z}$, we will define our up-sample operator $U$ as follows:
\[ 
  U ({\bf{x}}) = ( x_{\lfloor k/2 \rfloor} )_{k \in \Z} = ( \dots,
  x_{-1}, x_{-1}, x_{0} , x_{0} , x_1 , x_1, \dots ) \ .
\]

\begin{definition}
  The {\bf{up-sampled circular Pascal array of order $d$}}, denoted $
  ( {\bf{p}}_n )_{n \ge 0} $ is defined by ${\bf p}_n = U(
  \boldsymbol\sigma_n ) {\textrm{ for all }} n \geq 0$.
\end{definition}
Note that up-sampled array of order $d$ repeats in blocks of size
$2d$.  Observe that $ U(I + R) = (I + R^2)U $ on $ \R^{\infty} $, so
it follows that the up-sampled array can be expressed inductively as
\begin{equation}\label{eqn.p_n_inductive}
  {\bf p}_n = (I + R^2)^n {\bf p}_0,
\end{equation}
where ${\bf p}_0$ is the initial vector.  In our applications, we
typically use ${\bf p}_0 = U(\boldsymbol\sigma_0)$, for our chosen
initial vector $\boldsymbol\sigma_0$, and so in the prototypical case
({\it i.e.}, $\boldsymbol\sigma_0 = {\bf e}_0$), we have
\begin{equation}\label{eqn.p_0}
  {\bf p}_0 = ( \dots, {\bf{1}}, 1, \underbrace{0, 0, \dots , 0,
    0}_{2d-2 \;\textrm{zeros}}, 1, 1, 0, \dots ).
\end{equation}
Now define the differences: $ ( {\bf{q}}_n )_{n \geq 0} $ , by $ {\bf
  q}_n = D( {\bf p}_n ) $ for all $ n \geq 0 $. Each row in this
collection of differences is of course $2d$-periodic.  It turns out
that $ {\bf q}_{n+1} $ can be obtained from $ {\bf q}_{n} $ the same
way as $ {\bf p}_{n+1} $ is obtained from $ {\bf p}_{n} $:
\begin{lemma}\label{lemma.main}
  For all $n \geq 0$, ${\bf q}_{n} = (I + R^2)^n {\bf q}_0$.
\end{lemma}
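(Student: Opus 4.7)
The plan is to prove the identity by showing that the operator $D = I - L$ commutes with $(I+R^2)$, so the two operations may be interchanged when applied to $\mathbf{p}_0$. Everything in sight is built from the shift operators $L$ and $R$, and since $L = R^{-1}$, any two polynomials in $L$ and $R$ commute on $\R^\infty$. In particular $(I-L)(I+R^2) = (I+R^2)(I-L)$.

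Given this, the proof is essentially a one-line computation. Starting from the recursion~(\ref{eqn.p_n_inductive}) established just above the statement, I would write
\[
  \mathbf{q}_n \;=\; D(\mathbf{p}_n) \;=\; D\bigl((I+R^2)^n \mathbf{p}_0\bigr) \;=\; (I+R^2)^n D(\mathbf{p}_0) \;=\; (I+R^2)^n \mathbf{q}_0,
\]
where the middle equality uses the commutativity of $D$ with $(I+R^2)^n$, and the last equality is the definition $\mathbf{q}_0 = D(\mathbf{p}_0)$. If one prefers, the same fact can be proved by a trivial induction on $n$: the base case $n=0$ is immediate, and for the inductive step,
\[
  \mathbf{q}_{n+1} = D(\mathbf{p}_{n+1}) = D(I+R^2)\mathbf{p}_n = (I+R^2)D(\mathbf{p}_n) = (I+R^2)\mathbf{q}_n,
\]
which by the inductive hypothesis equals $(I+R^2)^{n+1}\mathbf{q}_0$.

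There is no real obstacle here; the lemma is a purely formal consequence of the fact that $D$ is a polynomial in $R$ (namely $I - R^{-1}$) and therefore commutes with $I+R^2$. The only thing to be careful about is that the operators are defined on the space $\R^\infty$ of all sequences, so that $L = R^{-1}$ genuinely makes sense and no convergence issues arise; periodicity of the vectors involved guarantees that every expression in sight is well defined. Substantive content of the commutation will be used later, when this lemma is combined with the recursion for $\mathbf{p}_n$ to extract ranges (maxima minus minima) of rows of the circular Pascal array, but at this stage the statement itself is just the commutativity observation packaged for later use.
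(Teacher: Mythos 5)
Your proof is correct and follows essentially the same route as the paper: both rest on the observation that $D = I - L$ commutes with $I + R^2$, and the paper likewise carries out the inductive step $\mathbf{q}_n = (I+R^2)\mathbf{q}_{n-1}$ before invoking a simple induction. Your direct one-line version using~(\ref{eqn.p_n_inductive}) is an equivalent packaging of the same idea.
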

\begin{proof}
  Because $ I + R^2 $ and $ D = I - L $ commute,
  we have, for $n \geq 1$,
  \[
    {\bf q}_{n} = D ( {\bf p}_{n} ) = D ( I + R^2)( {\bf p}_{n-1} ) =
    ( I + R^2) D ( {\bf p}_{n-1} ) = ( I + R^2){\bf q}_{n-1}.
  \]
  A simple induction gives the result.
\end{proof}

Note that in the prototypical case, when ${\bf p}_0$ is given
by~(\ref{eqn.p_0}), we have
\[
  {\bf q}_0 = ( \dots, 0, -1, {\bf{0}}, 1, \underbrace{0, 0, \dots ,
    0}_{2d-3 \;\textrm{zeros}}, -1, 0, 1, 0, \dots ).
\]
However, let us analyze a more general case that will become useful in
later sections.  Let $0 \leq y_0 \leq d - 2$ and define the initial
vector $\boldsymbol\sigma_0$ via
\begin{equation}\label{eqn.startPascal}
  \boldsymbol\sigma_0 = \left( I + R + R^2 + \cdots +
  R^{y_0}\right){\bf e}_0 = ( \dots , \underbrace{ {\bf{1}}, 1, \dots,
    1}_{\text{$y_0 + 1$ ones}} , \underbrace{0, 0, \dots ,
    0}_{\text{$d - (y_0 + 1)$ zeros}} , \dots ),
\end{equation}
and define $\boldsymbol\sigma_n$ inductively
by~(\ref{eqn.vectorPascal}).  For the up-sampled version we set ${\bf
  p}_n = U(\boldsymbol\sigma_n)$ for all $n \geq 0$.  In particular,
for $ n=0$,
\begin{equation}\label{eqn.start}
  {\bf p}_0 = (I + R + R^2 + \cdots +
  R^{2y_0 + 1}){\bf e}'_0 = ( \dots, \underbrace{ {\bf{1}}, 1, \dots
    1, 1}_{\text{$2 y_0 + 2$ ones}} , \underbrace{0, 0, \dots ,0,
    0}_{\text{$2d - 2y_0 - 2$ zeros}}, \ldots ),
\end{equation}
where ${\bf e}'_0$ is the $2d$-periodic analog of ${\bf e}_0$.  The
relationship between up-sampled Pascal arrays and corridor number
rests on the following key fact.
\begin{lemma}\label{lem.q_0_formula}
  With the general initial vector ${\bf p}_0$ defined
  by~(\ref{eqn.start}), $L^{y_0}{\bf q}_0 = (-L^{y_0+1} +
  R^{y_0+1}){\bf e}'_0$.
\end{lemma}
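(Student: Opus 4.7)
The plan is to just compute $\mathbf{q}_0$ directly from the definition and then apply $L^{y_0}$, exploiting the fact that $L$ and $R$ are mutually inverse and hence commute freely with the polynomial expressions we encounter.

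First I would unpack the definition: $\mathbf{q}_0 = D(\mathbf{p}_0) = (I-L)\mathbf{p}_0$, and by (\ref{eqn.start}),
\[
\mathbf{q}_0 \;=\; (I-L)\bigl(I + R + R^2 + \cdots + R^{2y_0+1}\bigr)\mathbf{e}'_0.
\]
Next I would simplify the operator $(I-L)(I+R+\cdots+R^{2y_0+1})$ algebraically. Distributing gives
\[
\sum_{j=0}^{2y_0+1} R^j \;-\; \sum_{j=0}^{2y_0+1} L R^j,
\]
and since $LR = I$, the term $LR^j$ equals $R^{j-1}$ for $j\ge 1$ and equals $L$ for $j=0$. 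The second sum therefore equals $L + I + R + \cdots + R^{2y_0}$, which telescopes against most of the first sum, leaving $R^{2y_0+1} - L$. Thus
\[
\mathbf{q}_0 \;=\; \bigl(R^{2y_0+1} - L\bigr)\mathbf{e}'_0.
\]

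Finally I would apply $L^{y_0}$. Since all powers of $L$ and $R$ commute and $L^{y_0}R^{2y_0+1} = R^{y_0+1}$, this yields
\[
L^{y_0}\mathbf{q}_0 \;=\; \bigl(R^{y_0+1} - L^{y_0+1}\bigr)\mathbf{e}'_0 \;=\; \bigl(-L^{y_0+1} + R^{y_0+1}\bigr)\mathbf{e}'_0,
\]
as claimed. There is no real obstacle here; the whole statement is a one-line operator identity, and the only thing to be slightly careful about is the bookkeeping in the telescoping step, which depends on the upper limit being odd (namely $2y_0+1$), matching the form of $\mathbf{p}_0$ in (\ref{eqn.start}).
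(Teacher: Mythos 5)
Your proof is correct and is essentially identical to the paper's own argument: both expand $D(\mathbf{p}_0)=(I-L)(I+R+\cdots+R^{2y_0+1})\mathbf{e}'_0$, telescope the geometric sum to $(-L+R^{2y_0+1})\mathbf{e}'_0$, and then apply $L^{y_0}$. (One tiny quibble: the telescoping identity $(I-L)\sum_{j=0}^{N}R^j=R^{N}-L$ holds for any $N\geq 0$; the oddness of $2y_0+1$ matters only for the final cancellation $L^{y_0}R^{2y_0+1}=R^{y_0+1}$, not for the telescoping itself.)
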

\begin{proof} 
  \begin{eqnarray*}
    {\bf q}_0 &=& D({\bf p}_0) \\ &=& (I - R^{-1})(I + R + R^2 +
    \cdots + R^{2y_0+1}){\bf e}'_0\\ &=& (-R^{-1} + R^{2y_0 + 1}){\bf
      e}'_0\\ &=& (-L + R^{2y_0 + 1}){\bf e}'_0 \\ L^{y_0}{\bf q}_0
    &=& (-L^{y_0+1} + R^{y_0+1}){\bf e}'_0.
  \end{eqnarray*}
\end{proof}

\subsection{Dual Corridors}
Let us shift our attention now to corridors and corridor numbers.
Again fix $ d \geq 2$. Recall that the corridor numbers of order $m =
d-2$ count corridor paths starting at $ (0,0) $ in $ \N \times \{ 0,
1, \dots , d-2 \} $.  Instead, let us shift up a unit and consider
corridor paths starting at $ (0,1) $ in $ \N \times \{ 1, 2, \dots , d
- 1 \} $. In fact, we introduce a dual corridor structure, of {\it
  positive} corridor paths starting at $ (0,1) $ in $ \N \times \{ 1,
2, \dots , d - 1 \} $ together with {\it negative} corridor paths
starting at $ (0,-1) $ in $ \N \times \{ -1, -2, \dots , -(d - 1) \}
$.  Embed the dual corridors into the lattice $\N \times \Z$ and
extend by $2d$-periodicity in the second component.  Let us denote the
signed number of paths incoming to vertex $ (n,k) $ by $ v_{n,k}
$. Here the signs are chosen according to $ v_{n, k} > 0 $ if $ k
\equiv 1, 2, \dots , d-1 $ (mod $2d$) and $ v_{n, k} < 0 $ if $ k
\equiv -1, -2, \dots , -(d-1) $ (mod $2d$).  Let $ {\bf v}_n = (
v_{n,k} )_{k \in \Z} $, which one may say is the {\bf state} of our
periodic corridor at step $n$.  More generally, consider the initial
state, ${\bf v}_0$, defined so that it will correspond to a corridor
path starting at vertex $(0, y_0+1)$, {\it i.e.},
\begin{equation}\label{eqn.v_0}
  {\bf v}_0 = ( \dots, 0, -1, \underbrace{ 0, \ldots, 0, {\bf 0}, 0,
    \ldots, 0} _{\text{$2 y_0 + 1$ zeros}} , 1, 0, \ldots).
\end{equation}
The key observation, using Lemma~\ref{lem.q_0_formula}, is that there
is a link between differences of circular Pascal array entries and the
vertex state numbers:
\begin{equation}\label{eqn.v_0_L}
  {\bf v}_0 = (-L^{y_0+1} + R^{y_0+1}){\bf e}'_0 = L^{y_0} { \bf q}_0.
\end{equation}

\begin{figure}[ht]
  \begin{center}
    \psset{unit=1cm,xunit=1.5cm}
    \begin{pspicture}(0,-5)(5,5)
      \psaxes(0,0)(0,-5)(5,5)
      \psline[linestyle=dashed,linecolor=gray](0,1)(5,1)
      \psline[linestyle=dashed,linecolor=gray](0,-1)(5,-1)
      \psline[linestyle=dashed,linecolor=gray](0,4)(5,4)
      \psline[linestyle=dashed,linecolor=gray](0,-4)(5,-4)
      \psline[linestyle=dashed,linecolor=gray](0,5)(5,5)
      \psline[linestyle=dashed,linecolor=gray](0,-5)(5,-5)
      \psline(0,1)(1,2)
      \psline(1,2)(2,3)
      \psline(1,2)(2,1)
      \psline(2,1)(3,2)
      \psline(2,3)(3,2)
      \psline(2,3)(3,4)
      \psline[doubleline=true](3,2)(4,1)
      \psline[doubleline=true](3,2)(4,3)
      \psline(3,4)(4,3)
      \psline[doubleline=true](4,1)(5,2)
      \psline(4,3)(5,2)
      \psline(4,3.07)(5,2.07)
      \psline(4,2.93)(5,1.93)
      \psline(4,3)(5,4)
      \psline(4,3.07)(5,4.07)
      \psline(4,2.93)(5,3.93)
      \psline(0,-1)(1,-2)
      \psline(1,-2)(2,-3)
      \psline(1,-2)(2,-1)
      \psline(2,-1)(3,-2)
      \psline(2,-3)(3,-2)
      \psline(2,-3)(3,-4)
      \psline[doubleline=true](3,-2)(4,-1)
      \psline[doubleline=true](3,-2)(4,-3)
      \psline(3,-4)(4,-3)
      \psline[doubleline=true](4,-1)(5,-2)
      \psline(4,-3)(5,-2)
      \psline(4,-3.07)(5,-2.07)
      \psline(4,-2.93)(5,-1.93)
      \psline(4,-3)(5,-4)
      \psline(4,-3.07)(5,-4.07)
      \psline(4,-2.93)(5,-3.93)
      \psdots(0,1)(1,2)(2,1)(2,3)(3,2)(3,4)
        (4,1)(4,3)(5,2)(5,4)
      \psdots(0,-1)(1,-2)(2,-1)(2,-3)(3,-2)(3,-4)
        (4,-1)(4,-3)(5,-2)(5,-4)
      \uput[70](0,1){$1$}
      \uput[u](1,2){$1$}
      \uput[u](2,1){$1$}
      \uput[u](2,3){$1$}
      \uput[u](3,2){$2$}
      \uput[u](3,4){$1$}
      \uput[u](4,1){$2$}
      \uput[u](4,3){$3$}
      \uput[u](5,2){$5$}
      \uput[u](5,4){$3$}
      \uput[-80](0,-1){$-1$}
      \uput[d](1,-2){$-1$}
      \uput[d](2,-1){$-1$}
      \uput[d](2,-3){$-1$}
      \uput[d](3,-2){$-2$}
      \uput[d](3,-4){$-1$}
      \uput[d](4,-1){$-2$}
      \uput[d](4,-3){$-3$}
      \uput[d](5,-2){$-5$}
      \uput[d](5,-4){$-3$}
      \psline[linestyle=dotted](0,1)(1,0)
      \psline[linestyle=dotted](2,1)(3,0)
      \psline[linestyle=dotted](4,1)(5,0)
      \psline[linestyle=dotted](0,-1)(1,0)
      \psline[linestyle=dotted](2,-1)(3,0)
      \psline[linestyle=dotted](4,-1)(5,0)
      \uput[u](1,0){$0$}
      \uput[u](3,0){$0$}
      \uput[u](5,0){$0$}
    \end{pspicture}
  \end{center}
\caption{The dual corridor structure for $d=5$ and $y_0 = 0$ For
  example, the initial state is ${\bf v}_0 = ( \ldots, 0, -1, {\bf 0},
  1, 0, \ldots )$, and we have ${\bf v}_5 = ( \ldots, 0, -3, 0, -5, 0,
  {\bf 0}, 0, 5, 0, 3, 0, \ldots)$.}
\label{fig.dual_corridor}
\end{figure}
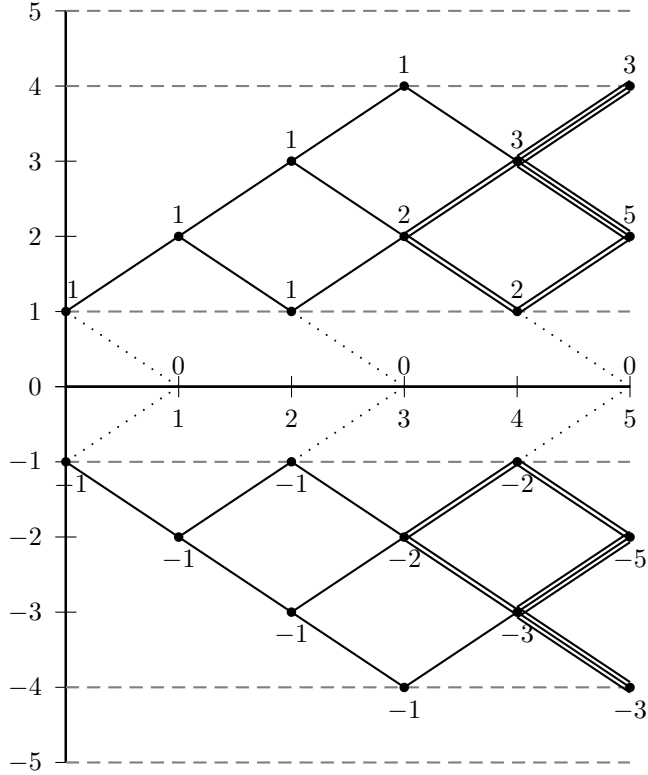
  
By the very definition of the $ {\bf v}_n $ we have
\begin{equation}\label{eqn.corridor1}
  v_{n,0} = v_{n, \pm d } = v_{n, \pm 2d } = \dots = 0.
\end{equation}
Moreover, $ v_{n,k} $ are antisymmetric about $ k = 0 $, {\it i.e.},
\begin{equation}\label{eqn.corridor2}
  v_{n,-k} = -v_{n, k } \textrm{ for all } k \in \Z.
\end{equation}

Now consider the state of our corridor at step $ n + 1 $.  Recall that
the upper and lower corridor states are represented by $ v_{n+1,k} $
for $ k = 1, 2, \dots, d-1 $ and $ k = -1, -2, \dots, -(d-1) $
respectively.  An interior vertex $ (n,k) $, $k = 2, \dots, d - 2 $
receives paths from both $ (n, k-1) $ and $ (n, k+1) $, and so
\begin{equation}\label{eqn.corridor3}
  v_{n+1, k} = v_{n,k-1} + v_{n,k+1}.
\end{equation}
At the boundaries, however, $ v_{n+1, 1} = v_{n,2} $ and $ v_{n+1,
  d-1} = v_{n,d-2} $.  But observe that~(\ref{eqn.corridor1})
and~(\ref{eqn.corridor2}) imply that~(\ref{eqn.corridor3}) does hold
for all $k = -(d-1), -(d-2), \dots, 0, 1, \dots d $.  Then by
periodicity,~(\ref{eqn.corridor3}) holds for all $ k \in \Z $.  In our
operator notation, this gives
\begin{equation}\label{eqn.corridor4}
  {\bf v}_{n + 1} = (L + R) {\bf v}_{n }.
\end{equation}

\subsection{Proof of the Main Theorem}

We are now in position to prove the main result of this paper.

\begin{lemma}\label{lem.main}
  Fix $d \geq 2$.  Let $0 \leq y_0 \leq d-2$.  For each $n \geq 0$,
  ${\bf v}_n = L^{n + y_0 } {\bf q}_n$, {\it i.e.}, $ v_{n,k} =
  q_{n,k+n + y_0 }$, for all $k \in \Z$.
\end{lemma}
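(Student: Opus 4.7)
The plan is to prove this by induction on $n$, using the two recurrences established in the preceding sections: the vector recurrence ${\bf v}_{n+1} = (L+R){\bf v}_n$ from~(\ref{eqn.corridor4}) for the dual corridor, and the recurrence ${\bf q}_{n+1} = (I+R^2){\bf q}_n$ from Lemma~\ref{lemma.main} for the differences of the up-sampled Pascal rows. The conclusion of Lemma~\ref{lem.q_0_formula} (restated in~(\ref{eqn.v_0_L})) gives exactly the base case $n=0$, so the work is really all in the inductive step.

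The key algebraic observation is that the two recurrence operators are related by a shift: since $L = R^{-1}$, one has
\[
  L + R = L(I + L^{-1}R) = L(I + R^{2}),
\]
and of course $L$ commutes with $I + R^2$ on $\R^\infty$.

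With that, the inductive step runs as follows. Assume ${\bf v}_n = L^{n+y_0}{\bf q}_n$. Then by~(\ref{eqn.corridor4}) and the identity above,
\[
  {\bf v}_{n+1} = (L+R){\bf v}_n = L(I+R^2)L^{n+y_0}{\bf q}_n = L^{n+y_0+1}(I+R^2){\bf q}_n,
\]
and one application of Lemma~\ref{lemma.main} replaces $(I+R^2){\bf q}_n$ by ${\bf q}_{n+1}$, yielding ${\bf v}_{n+1} = L^{(n+1)+y_0}{\bf q}_{n+1}$, as required.

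There is not really a hard step here; the whole point is that the setup of the previous subsections has already done all the work. The only thing to watch is to make sure the base case is genuinely Lemma~\ref{lem.q_0_formula}, which requires matching the definition of ${\bf v}_0$ in~(\ref{eqn.v_0}) to $(-L^{y_0+1}+R^{y_0+1}){\bf e}'_0$. That is immediate from the shape of ${\bf v}_0$ (a single $+1$ in position $y_0+1$ and a single $-1$ in position $-(y_0+1)$, within one period of ${\bf e}'_0$), and then~(\ref{eqn.v_0_L}) identifies this with $L^{y_0}{\bf q}_0$.
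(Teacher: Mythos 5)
Your proof is correct and follows essentially the same route as the paper's: both rest on the factorization $L+R = L(I+R^2)$, the base case ${\bf v}_0 = L^{y_0}{\bf q}_0$ from Lemma~\ref{lem.q_0_formula}, and the recurrence of Lemma~\ref{lemma.main}. The only cosmetic difference is that you apply the factorization one step at a time inside the induction, whereas the paper first writes ${\bf v}_n = (L+R)^n{\bf v}_0$ and factors $(L+R)^n = L^n(I+R^2)^n$ all at once.
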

\begin{proof}
  By~(\ref{eqn.corridor4}) and induction, $ {\bf v}_{n} = (L + R)^n
  {\bf v}_{0} $ for all $ n \geq 0 $.  But $ (L + R)^n = [ L (I +
    R^2)]^n = L^n (I + R^2)^n $. 
  Using $ {\bf v}_0
  = L^{y_0}{\bf q}_0 $ and
  Lemma~\ref{lemma.main}, we have
  \[
    {\bf v}_n = L^n(I+R^2)^n(L^{y_0}{\bf q}_0) =
    L^{n+y_0}(I+R^2)^n{\bf q}_0 = L^{n+y_0}{\bf q}_n.
  \]
\end{proof}

\begin{theorem}\label{thm.main}
  For fixed $ d \geq 2 $, and $0 \leq y_0 \leq d-2$, the $ n^{th} $
  corridor number of order $ d - 2 $, with paths beginning at $(0,
  y_0)$ is given by $ c^{(d-2)}_{n,y_0} = p^{(d)}_{n,n+y_0} -
  p^{(d)}_{n,n+y_0+d}$.  Moreover, $p^{(d)}_{n,n+y_0} $ and $
  p^{(d)}_{n,n+y_0+d}$ are respectively, the maximum and minimum
  values found in the $ n^{th} $ row of the circular Pascal array of
  order $d$ whose initial vector, $\boldsymbol\sigma_0$ is given
  by~(\ref {eqn.startPascal}).
\end{theorem}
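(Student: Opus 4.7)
The plan is to deduce both the formula and the max/min characterization directly from Lemma~\ref{lem.main}, which identifies the dual corridor state vector ${\bf v}_n$ with a shifted copy of ${\bf q}_n$. First I would unpack that lemma together with ${\bf q}_n = D({\bf p}_n) = (I - L){\bf p}_n$ to obtain the pointwise identity
\[
  v_{n,k} \;=\; q_{n,\,k+n+y_0} \;=\; p^{(d)}_{n,\,k+n+y_0} - p^{(d)}_{n,\,k+n+y_0+1} \qquad (k \in \Z),
\]
which will serve as the workhorse of the whole proof.

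Next I would interpret the corridor number combinatorially through the dual corridor: translating the upper corridor $\N \times \{1,2,\ldots,d-1\}$ down one unit puts paths of length $n$ from $(0, y_0+1)$ into bijection with paths of length $n$ from $(0, y_0)$ in the $(d-2)$-corridor, so $c^{(d-2)}_{n,y_0} = \sum_{k=1}^{d-1} v_{n,k}$. Because the boundary condition $v_{n,0} = 0$ from~(\ref{eqn.corridor1}) lets us pad the $k=0$ term in without changing the sum, the workhorse identity produces a telescoping sum
\[
  c^{(d-2)}_{n,y_0} \;=\; \sum_{k=0}^{d-1}\left(p^{(d)}_{n,\,k+n+y_0} - p^{(d)}_{n,\,k+n+y_0+1}\right) \;=\; p^{(d)}_{n,\,n+y_0} - p^{(d)}_{n,\,n+y_0+d},
\]
which is the claimed formula.

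For the max/min assertion I would exploit the sign pattern of ${\bf v}_n$. The entries $v_{n,k}$ for $k = 1, \ldots, d-1$ are non-negative because they count actual corridor paths, so the workhorse identity forces the chain $p^{(d)}_{n, n+y_0+1} \geq p^{(d)}_{n, n+y_0+2} \geq \cdots \geq p^{(d)}_{n, n+y_0+d}$, while the antisymmetry~(\ref{eqn.corridor2}) symmetrically forces $p^{(d)}_{n, n+y_0-d+1} \leq \cdots \leq p^{(d)}_{n, n+y_0}$. The equalities $v_{n,0} = v_{n,\pm d} = 0$ from~(\ref{eqn.corridor1}) then glue these two monotone chains at their endpoints, collapsing $p^{(d)}_{n, n+y_0}$ with $p^{(d)}_{n, n+y_0+1}$ and, via $2d$-periodicity, $p^{(d)}_{n, n+y_0+d}$ with $p^{(d)}_{n, n+y_0+d+1}$. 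Together these chains cover a full period of length $2d$, whose maximum sits at position $n+y_0$ and whose minimum sits at position $n+y_0+d$.

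The main bookkeeping obstacle I anticipate is the off-by-one issue: the raw telescope from summing $k = 1, \ldots, d-1$ would naturally produce $p^{(d)}_{n, n+y_0+1} - p^{(d)}_{n, n+y_0+d}$, so the passage to the cleaner $p^{(d)}_{n, n+y_0} - p^{(d)}_{n, n+y_0+d}$ depends crucially on using $v_{n,0} = 0$ to identify $p^{(d)}_{n, n+y_0} = p^{(d)}_{n, n+y_0+1}$. Handling the analogous boundary zero at $\pm d$ is exactly what allows the max/min argument to close up into a single period instead of leaving a dangling endpoint; once this is in hand the rest is a routine telescoping-plus-monotonicity argument.
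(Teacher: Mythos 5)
Your proposal is correct and follows essentially the same route as the paper: the identity $v_{n,k}=q_{n,k+n+y_0}=p_{n,k+n+y_0}-p_{n,k+n+y_0+1}$ from Lemma~\ref{lem.main}, the telescoping sum over one period (with the $k=0$ term absorbed via $v_{n,0}=0$), and the sign pattern of $\mathbf{v}_n$ yielding the two monotone chains that place the maximum at $n+y_0$ and the minimum at $n+y_0+d$. The only detail you leave implicit is the final (trivial) observation that extrema of the up-sampled row $\mathbf{p}_n$ coincide with those of $\boldsymbol\sigma_n$, which the paper states in one closing sentence.
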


\begin{proof}
  Let $p_{n,k}$, {\it resp.}~$q_{n,k}$, be the $k^{th}$ entry of ${\bf
    p}_n$, {\it resp.}~${\bf q}_n$.  By definition, $ c_{n, y_0} =
  \sum_{k = 0}^{d - 1} v_{n,k} $. So by Lemma~\ref{lem.main},
  \[
    c_{n, y_0} = \sum_{k = 0}^{d - 1} q_{n,k+n+y_0} = \sum_{k =
      n+y_0}^{n + y_0 + d - 1} q_{n,k} = \sum_{k = n+y_0}^{n + y_0+ d
      - 1} \left( p_{n,k} - p_{n,k+1} \right) = p_{n,n+y_0} - p_{n, n+
      y_0+d}.
  \]
  Moreover, by the definition of the dual corridor states,
  $\mathbf{v}_n$, it is clear that the differences $ q_{n,k}$ satisfy:
  \[
    q_{n,n+y_0 + j} = \left\{ \begin{array}{ll}
      v_{n,0} = 0, & \textrm{if $j\equiv 0 \quad (\textrm{mod}\; 2d)$,}\\
      v_{n,j} \geq 0, & \textrm{if $j \equiv 1, 2, \ldots, d-1
      \quad (\textrm{mod}\; 2d)$,} \\
      v_{n,d} = 0, & \textrm{if $j \equiv d \quad (\textrm{mod}\; 2d)$,}\\
      v_{n,j} \leq 0, & \textrm{if $j \equiv d+1, d+2, 
        \ldots, 2d - 1 \quad (\textrm{mod}\; 2d)$}.
    \end{array}\right.
  \]
  Now since $q_{n,n+y_0+j} = p_{n,n+y_0+j}-p_{n,n+y_0+j+1}$ is
  essentially a negative discrete derivative, it follows that
  $p_{n,n+y_0}$ is a maximum value of ${\bf p}_n$, and $p_{n,n+y_0+d}$
  is a minimum value of ${\bf p}_n$.  Of course the maximum and
  minimum values of the up-sampled array correspond to those in the
  original $ ( \boldsymbol\sigma_n )_{ n \in \N }$, which completes
  the proof.
\end{proof}

{\it Example.} Let $ d = 8 $ and $y_0 = 2$. To find the corridor
numbers of order $d - 2 = 6$ in $ \N \times \{ 0, 1, \dots , 6 \} $,
but with corridor paths starting at $ (0, 2) $, we take the range of
the $ n^{th} $ of the Pascal array mod 8, but using
$\boldsymbol\sigma_0 = (1, 1, 1, 0, 0, 0, 0, 0 ) \in \R^8 $ (extended
periodically) as our initial row (See Fig.~\ref{fig.circular_d=8}).

\begin{figure}
\[
  \begin{array}{l|llllllll|l}
    d=8 \\ 
    n \setminus k & 0 & 1 & 2 & 3 & 4 & 5 & 6 & 7 &
    \textrm{Range} \\ 
    \hline 
    0 & 1 & 1 & 1 & 0 & 0 & 0 & 0 & 0 & 1\\ 
    1 & 1 & 2 & 2 & 1 & 0 & 0 & 0 & 0 & 2\\ 
    2 & 1 & 3 & 4 & 3 & 1 & 0 & 0 & 0 & 4\\
    3 & 1 & 4 & 7 & 7 & 4 & 1 & 0 & 0 & 7\\ 
    4 & 1 & 5 & 11 & 14 & 11 & 5 & 1 & 0 & 14\\
    5 & 1 & 6 & 16 & 25 & 25 & 16 & 6 & 1 & 24\\
    6 & 2 & 7 & 22 & 41 & 50 & 41 & 22 & 7 & 48\\
    7 & 9 & 9 & 29 & 63 & 91 & 91 & 63 & 29 & 82\\ 
    8 & 38 & 18 & 38 & 92 & 154 & 182 & 154 & 92 & 164\\
    9 & 130 & 56 & 56 & 130 & 246 & 336 & 336 & 246 & 280\\
    \vdots & \vdots & \vdots & \vdots & \vdots & \vdots 
    & \vdots & \vdots & \vdots & \vdots
  \end{array}
\]
\caption{Circular Pascal numbers corresponding to $d=8$ and $y_0=2$.}
\label{fig.circular_d=8}
\end{figure}

\begin{cor}\label{cor.main}
  The $(d-2)$-corridor number $c_n$ ({\it i.e.}, when $y_0 = 0$)
  equals the range of the $n^{th}$ row of the (standard) circular
  Pascal array or order $d$.  More explicitly, $c^{(d-2)}_n =
  \sigma^{(d)}_{n, \lfloor n/2 \rfloor} - \sigma^{(d)}_{n, \lfloor
    (n+d)/2 \rfloor}$.
\end{cor}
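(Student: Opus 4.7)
The plan is to derive this as a direct specialization of Theorem~\ref{thm.main}, together with the translation between the up-sampled array $\mathbf{p}_n$ and the original circular Pascal array $\boldsymbol\sigma_n$.

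First I would set $y_0 = 0$ in Theorem~\ref{thm.main}. With this choice, formula~(\ref{eqn.startPascal}) collapses to $\boldsymbol\sigma_0 = \mathbf{e}_0$, which is precisely the initial row of the standard circular Pascal array of order $d$ from Definition~\ref{def.circularPascal}. The theorem then asserts both the identity $c^{(d-2)}_{n,0} = p^{(d)}_{n,n} - p^{(d)}_{n,n+d}$ and the fact that these two values are respectively the maximum and minimum entries in the $n^{th}$ row of the up-sampled array $\mathbf{p}_n$. Since $c^{(d-2)}_n = c^{(d-2)}_{n,0}$ by our convention for the standard corridor number, the left-hand side is already identified.

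Next I would translate back from the up-sampled array to $\boldsymbol\sigma_n$ using the very definition of the up-sample operator, $U(\mathbf{x}) = (x_{\lfloor k/2\rfloor})_{k \in \Z}$. This yields the pointwise identity $p^{(d)}_{n,k} = \sigma^{(d)}_{n,\lfloor k/2 \rfloor}$ for every $k \in \Z$. Applied at $k = n$ and $k = n+d$, this gives
\[
  p^{(d)}_{n,n} = \sigma^{(d)}_{n,\lfloor n/2 \rfloor}, \qquad p^{(d)}_{n,n+d} = \sigma^{(d)}_{n,\lfloor (n+d)/2 \rfloor},
\]
which after substitution yields the stated closed form for $c^{(d-2)}_n$.

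Finally, I would observe that the multiset of values appearing in $\mathbf{p}_n$ equals the multiset of values in $\boldsymbol\sigma_n$ (each value is merely duplicated under up-sampling), so the range statement of Theorem~\ref{thm.main} immediately transfers: $\sigma^{(d)}_{n,\lfloor n/2 \rfloor}$ is the maximum and $\sigma^{(d)}_{n,\lfloor(n+d)/2\rfloor}$ is the minimum in the $n^{th}$ row of the circular Pascal array, so their difference is indeed the range. There is no real obstacle here; the only thing to be careful about is verifying that the index translation through $U$ is done correctly for both parities of $n$, which is handled uniformly by the floor function.
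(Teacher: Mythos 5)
Your proposal is correct and matches the paper's intent exactly: the corollary is left as an immediate consequence of Theorem~\ref{thm.main}, obtained by setting $y_0=0$ (so that $\boldsymbol\sigma_0={\bf e}_0$ recovers the standard array) and translating indices through $p^{(d)}_{n,k}=\sigma^{(d)}_{n,\lfloor k/2\rfloor}$. Your added remark that up-sampling preserves the multiset of values, so the max/min (and hence range) statement transfers, is precisely the observation the paper makes at the end of its proof of Theorem~\ref{thm.main}.
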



\section{Further Results}

\subsection{Deriving the K-M Formula}

In this section we prove the K-M formula~(\ref{eqn.KraMoh_formula}) in
full generality. Let $s, t \in \Z$ such that $t \geq 0 \geq s$, and
$a, b \in \Z$ such that $a + t \geq b \geq a + s$.  Then the number of
K-M paths, $D(a,b; s, t) $ starting at $ (0,0) $ and ending at $(a,b)$
is equivalent to the number of $m$-corridor paths in $ \N \times \{ 1,
2, \dots, d - 1 \} $ beginning at $(0, y_0+1) $ and ending at $ (a +
b, b - a + y_0 + 1)$, where $m = d - 2 = t - s$ and $y_0 = - s $.
That is, $D(a,b; s, t) = v_{ a+b,\, b - a + y_0 + 1}$.  Using
Lemma~\ref{lem.main} and the definitions of ${\bf q}_n$, ${\bf p}_n$,
and $\boldsymbol\sigma_n$, this leads to
\begin{eqnarray*}
  D(a,b; s, t) &=& q_{a+b,\, (b-a+y_0+1)+(a+b)+y_0} \\
  &=& q_{a+b,\, 2b - 2s + 1}\\
  &=& p_{a+b,\, 2b-2s+1} - p_{a+b,\, 2b-2s+2} \\
  &=& \sigma_{a+b,\, b-s} - \sigma_{a+b,\, b-s+1}.
\end{eqnarray*}
Using the linearity of $ (I + R)^n $ and the fact that $ R $ commutes
with $ (I + R)^n $, then it is easy to see that
\begin{equation}\label{eqn.modd2}
  \sigma_{n,k} = \sum_{j \in \Z} \left[ { \binom{n}{k + dj}} + { \binom{n}
      {(k - 1) + dj}} + \dots + { \binom{n}{(k - y_0) + dj}} \right].
\end{equation}
Then by~(\ref{eqn.modd2}), we have:
\begin{eqnarray*} 
  D(a,b; s, t) & = & \sigma_{a+b,\, b-s} - \sigma_{a+b, b-s + 1}
  \\ &=& \sum_{k \in \Z} \left[ { \binom{a + b}{b-s+dk} } + { \binom{a
        + b}{b-s-1+dk } } + \cdots + { \binom{a + b}{ b-s-(-s) + dk} }
    \right] \\ && - \sum_{k \in \Z} \left[ { \binom{a + b}{b-s+1+dk} }
    + { \binom{a + b}{b-s+dk } } + \cdots + { \binom{a + b}{
        b-s+1-(-s) + dk} } \right]\\ & = & \sum_{k \in \Z} \left[ {
      \binom{a + b}{ b + dk } } - { \binom{a + b}{ b - s + 1 + dk } }
    \right]\\ &=& \sum_{k \in \Z} \left[ { \binom{a + b}{ a - dk } } -
    { \binom{a + b}{ a + s - 1 - dk } } \right]\\ &=& \sum_{k \in \Z}
  \left[ { \binom{a + b}{ a - k(t-s+2)} } - { \binom{a + b}{ a -
        k(t-s+2) + s - 1} } \right].
\end{eqnarray*}
The final step is to re-index the second terms via $k \mapsto k+1$,
and obtain~(\ref{eqn.KraMoh_formula}).

\subsection{Infinite Width Corridors}
Consider corridor paths beginning at $ (0,y_0) $, but in the infinite
corridor $ \N \times \N $; we denote the number of such paths of
length $n$ by $ c^{ ( \infty ) }_{n , y_0} $.  It is easy to see that
$ c^{ ( \infty ) }_{n , y_0} $ is the number of $ n $-tuples $ ( r_1 ,
r_2 , \dots , r_n ) $ satisfying $ r_k \in \{ -1 , 1 \} $ and $
\sum_{j = 1}^{k} r_j \ge -y_0 $ for all $ k = 1, 2, \dots , n $. We
pass on the challenge found in~\cite{Leeuwen2010}.

\begin{quote}
  The most basic case of the enumerative coincidences that we shall
  study is the fact that there are $\binom{2n}{n}$ positive walks of
  length $2n$, a number that also (and more obviously) counts the
  recurrent walks of that length. This result appears to be well
  known, at least in the lattice path community, but in view of its
  simplicity it is somewhat surprising that it does not receive
  prominent mention in the enumerative combinatorics literature. We do
  not know whether any nice bijective proofs for this result are
  known, but it would at least seem that none are ``well known.''
\end{quote}

Although we do not have a {\it bijective} proof, we now provide a
simple proof based on our structure already in place.  Fix $ n \geq 0
$ and $ y_0 \geq 0 $. For $ m = n + y_0$, {\it i.e., $d=n+y_0+2$},
there is no difference between $ c^{ ( \infty ) }_{n , y_0 } $ and $
c^{(m)}_{n , y_0 } $, since the paths of length $n$ in $ \N \times \{0
, 1, \dots , m \} $ are not yet restricted by the upper wall.

Thus by Theorem~\ref{thm.main},
\begin{eqnarray*}
  c^{ ( \infty ) }_{n , y_0 } = c^{ ( n + y_0) }_{n , y_0 } & = &
  p^{(n+y_0+2)}_{n,\, n + y_0} - p^{(n+y_0+2)}_{n, \,n + y_0 +
    (n+y_0+2)} \\ & = & p^{(n+y_0+2)}_{n,\, n + y_0} -
  p^{(n+y_0+2)}_{n, \,2n + 2y_0 + 2} \\ & = & \sigma^{(n+y_0+2)}_{n,\,
    \lfloor (n + y_0)/2 \rfloor } - \sigma^{(n+y_0+2)}_{n,\, n + y_0 +
    1}.
\end{eqnarray*}
By~(\ref{eqn.modd2}) we see that the second term must be zero and so $
c^{ ( \infty ) }_{n , y_0 } = \sigma_{n, \lfloor (n + y_0)/2 \rfloor
}$.  Finally, in the case $ y_0 = 0 $, we have $ c^{ ( \infty ) }_{n ,
  0 } = c^{ (n) }_{n} = \sigma^{(n+2)}_{n, \lfloor n/2 \rfloor} -
\sigma^{(n+2)}_{n, n+1} = \binom{n}{\lfloor n/2 \rfloor} - 0 $, the
central binomial coefficient as expected.

{\it Example.} Let $ n = 4 $ and $y_0 = 2$. To find the infinite
corridor number, we have $ c^{ ( \infty ) }_{4 , 2 } = c^{ ( 6 ) }_{4
  , 2 } = p_{4,6} - p_{4,14} = \sigma^{(8)}_{4,3} - \sigma^{(8)}_{4,
  7} $, which may be computed using~(\ref{eqn.modd2}), but actually
appear in Fig.~\ref{fig.circular_d=8}.  Indeed, we have $ c^{ ( \infty
  ) }_{4 , 2 } = \sigma_{4,3} - \sigma_{4, 7} = 14-0 = 14$.

\subsection{Three-choice Corridors}
When the allowable moves in a corridor include remaining at the same
level, the paths are often called ``Motzkin.''  The structure we have
already set up extends easily to such Motzkin paths.  Fix $ d \geq 2
$. We define an (up-sampled) circular Pascal-type array $ ( {\bf{p}}_n
)_{n \in \N} $ still using $ {\bf{p}}_0 = {\bf{e}}'_0 + R( {\bf{e}}'_0
) $ defined above, but now using $ T = I + R + R^2 $ to transition
from $ {\bf{p}}_n $ to $ {\bf{p}}_{n+1} $. Define the difference array
$ ( {\bf{q}}_n )_{n \in \N} $ as above, $ {\bf{q}}_n = D ( {\bf{p}}_n
) $ where $ D = I - L $. 

\begin{rmk}
  If we had begun with $ {\bf{p}}_0 = {\bf{e}}'_0$ instead, then the
  resulting array whose $n^{th}$ row is ${\bf p}_n = T^n{\bf p}_0$ is
  a periodization of what has been called the {\it trinomial
    triangle}~\cite{TrinomialTriangle}.
\end{rmk}

Now in the corridor $ \N \times \{ 1, 2, \dots, d - 1 \} $ let $c'_n$
be the number of paths of length $n$ (for simplicity, beginning at $
(0,1))$, but now allowing three choices in movement, up-and-right,
down-and-right and right. With $ {\bf{v}}_n $ defined in the periodic
dual corridor, exactly as above, we have $ {\bf{v}}_{n+1} = (L + I +
R) {\bf{v}}_n $.  With no extra effort, one can conclude that maximum
and minimum values on $ {\bf{p}}_n $ occur on the diagonals $ p_{n,n}
$ and $ p_{n, n + d} $ respectively, and that the difference is our
new corridor number $ c'_n $.  To this point, the proofs are exactly
the same as above; there is just no precursor array of period $d$.
Now let us use this fact to find these corridor numbers within
Pascal's triangle itself, {\it i.e.} to obtain a binomial coefficient
based formula for $c'_n$.

The key is to understand the effect of $ T^n $ on $ {\bf{e}}'_0 = (
\dots, 0, {\bf{1}}, 0 , \dots , 1, 0, \dots ) $, the $2d$-periodic
``unit vector", then to use the linearity of $T^n$.  Using the
commutativity of $R$ and $I + R$, we may apply the binomial theorem to
write
\begin{equation}\label{eqn.T^n}
  T^n = \left[ I + R(I +R) \right]^n = \sum_{j = 0}^{n} \binom{n}{j}
  \sum_{\ell = 0}^{j} \binom{j}{\ell} R^{j + \ell}.
\end{equation}

\begin{theorem}\label{thm.threeway}
  Let $ d \geq 2 $. Then $ p_{n , k} = \sum_{j = 0}^{n} \binom{n}{j}
  \sum_{m \in \Z} \binom{j+1}{2dm - j + k}$.
\end{theorem}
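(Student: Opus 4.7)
The plan is to compute $\mathbf{p}_n$ directly as $T^n\mathbf{p}_0$, using the binomial expansion of $T^n$ furnished by~(\ref{eqn.T^n}) and the fact that $\mathbf{p}_0 = (I+R)\mathbf{e}'_0$. Since all operators $I$, $R$, $T$ commute, I would pull $I+R$ to the right and write
\[
\mathbf{p}_n = T^n(I+R)\mathbf{e}'_0 = \sum_{j=0}^n \binom{n}{j}\sum_{\ell=0}^j \binom{j}{\ell}\bigl(R^{j+\ell} + R^{j+\ell+1}\bigr)\mathbf{e}'_0.
\]

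The first key step is a Pascal-rule combination of the two inner sums. Splitting off the endpoints $\ell=0$ and $\ell=j$ and shifting the index in the second sum gives
\[
\sum_{\ell=0}^j \binom{j}{\ell}R^{j+\ell} + \sum_{\ell=0}^j \binom{j}{\ell}R^{j+\ell+1} = \sum_{\ell=0}^{j+1} \left[\binom{j}{\ell} + \binom{j}{\ell-1}\right] R^{j+\ell} = \sum_{\ell=0}^{j+1}\binom{j+1}{\ell} R^{j+\ell},
\]
with the usual convention that $\binom{j}{-1} = \binom{j}{j+1} = 0$. Substituting back,
\[
\mathbf{p}_n = \sum_{j=0}^n \binom{n}{j} \sum_{\ell=0}^{j+1} \binom{j+1}{\ell} R^{j+\ell}\mathbf{e}'_0.
\]

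The second key step is to read off the $k$-th coordinate. Since $\mathbf{e}'_0$ is the $2d$-periodic unit vector, the $k$-th entry of $R^{j+\ell}\mathbf{e}'_0$ equals $1$ when $k - (j+\ell) \equiv 0 \pmod{2d}$ and $0$ otherwise; equivalently, it contributes exactly when $\ell = 2dm - j + k$ for some $m \in \Z$. Replacing the $\ell$-sum by this reindexing (and absorbing the range constraint $0 \le \ell \le j+1$ into the convention that $\binom{j+1}{\ell}$ vanishes outside that range) yields
\[
p_{n,k} = \sum_{j=0}^n \binom{n}{j} \sum_{m\in\Z} \binom{j+1}{2dm - j + k},
\]
which is exactly the stated formula.

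There is no real obstacle here beyond bookkeeping: the identity $T^n(I+R)\mathbf{e}'_0$ is a direct consequence of the construction of $\mathbf{p}_0$, the combination identity is a single application of Pascal's rule, and the final extraction is forced by the periodicity of $\mathbf{e}'_0$. The only point demanding a little care is the reindexing in the Pascal step, where one must correctly handle the boundary terms $\ell=0$ and $\ell=j+1$; but because $\binom{j}{-1}$ and $\binom{j}{j+1}$ are zero, the combined sum telescopes cleanly into the single binomial $\binom{j+1}{\ell}$ as claimed.
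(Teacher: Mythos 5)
Your proof is correct and follows essentially the same route as the paper: both expand $T^n$ via~(\ref{eqn.T^n}), use the $2d$-periodicity of $\mathbf{e}'_0$ to extract the $k$-th coordinate, and invoke Pascal's rule to merge the contributions of $\mathbf{e}'_0$ and $R\mathbf{e}'_0$ into the single binomial $\binom{j+1}{2dm-j+k}$. The only cosmetic difference is that you apply Pascal's rule at the operator level before reading off coordinates, whereas the paper computes the two coordinate sequences $a_k$ and $b_k$ separately and adds them at the end.
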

\begin{proof}
  Let us represent $ T^n ( {\bf{e}}'_0 ) $ and $ T^n ( R {\bf{e}}'_0 )
  $ by $ (a_k)_{k \in \N} $ and $ (b_k)_{k \in \N} $ respectively; of
  course these sequences must also be $2d$-periodic. The nonzero
  contributions of $ {\bf{e}}'_0 $ to $ a_k $ come from the $ 1 $'s in
  $ {\bf{e}}'_0 $ occurring at the integer multiples of $2d$ that are
  shifted by $R^{j + l}$ to position $k$ (mod $2d$), {\it i.e.} if $j+
  \ell = 2dm + k $ for $ m \in \Z $. Thus, using~(\ref{eqn.T^n}),
  \[
    a_k = \sum_{j = 0}^{n} \binom{n}{j} \sum_{m \in \Z} \binom{j} {2dm
      - j + k}, \quad \textrm{and} \quad b_k = \sum_{j = 0}^{n}
    \binom{n}{j} \sum_{m \in \Z} \binom{j} {2dm - j + k - 1}.
  \]
  Since $ p_{n,k} = T^n{\bf e}'_0 + T^nR{\bf e}'_0 = a_k + b_k $ we
  immediately obtain the result.
\end{proof}
As a consequence of Theorem~\ref{thm.threeway}, we find
\[
  c'_n = p_{n,n} - p_{n, n+d} = \sum_{j = 0}^{n} \binom{n}{j} \sum_{m
    \in \Z} \left[ \binom{j+1}{2dm - j + n} - \binom{j+1} {2dm - j + n
      + d} \right].
\]
Other counts associated with the three-way corridor paths may also be
analyzed along these lines.

\section{Conclusion}

Early on in this project we used MAPLE to animate plots of a damped
version of $\boldsymbol\sigma_{n}$, for $ n = 0, 1, \dots N$, to
observe a ``wave'' determined by the maximum and minimum values moving
along the successive rows of the Pascal arrays. Upon the up-sampling,
the extreme values fell nicely on diagonals of the circular Pascal
arrays.  Finally, with the introduction of the dual corridor, we found
the strong connection between the two structures.  Using the most
basic properties of a few simple operators, we arrived at our main
result, which easily led to a few nontrivial lattice path results.

\bigskip

The authors acknowledge support from the Department of Mathematics and
Computer Science at Valdosta State University.  We would like to thank
Glen Whitehead and Katie Klimko for their interest and insight into
this problem.  We also thank the anonymous reviewers whose effort
improved the presentation of this paper.


\begin{thebibliography}{10}

\bibitem{Brualdi}
Richard~A. Brualdi.
\newblock {\em Introductory Combinatorics}.
\newblock Pearson, fifth edition, 2008.

\bibitem{KK2011}
C.~Kicey and K.~Klimko.
\newblock Some geometry of {P}ascal's triangle.
\newblock {\em Pi Mu Epsilon Journal}, 13(4):229--245, 2011.

\bibitem{KraMoh}
C.~Krattenthaler and S.~G. Mohanty.
\newblock Lattice path combinatorics -- applications to probability and
  statistics.
\newblock In Norman~L. Johnson, Campbell~B. Read, N.~Balakrishnan, and Brani
  Vidakovic, editors, {\em Encyclopedia of Statistical Sciences}. Wiley, New
  York, second edition, 2003.

\bibitem{Leeuwen2010}
Marc A.~A. Leeuwen.
\newblock Some simple bijections involving lattice walks and ballot sequences.
\newblock Preprint. Available electronically as \url{arXiv:1010.4847}.

\bibitem{Mohanty}
S.~G. Mohanty.
\newblock {\em Lattice Path Counting and Applications}.
\newblock Academic Press, New York, 1979.

\bibitem{Narayana}
T.~V. Narayana.
\newblock {\em Lattice Path Combinatorics with Statistical Applications}.
\newblock Toronto University Press, Toronto, 1979.

\bibitem{OEIS}
N.~J.~A. Sloane.
\newblock The {O}n-{L}ine {E}ncyclopedia of {I}nteger {S}equences.
\newblock published electronically at http://oeis.org/.

\bibitem{Strang}
G.~Strang and T.~Ngyuyen.
\newblock {\em Wavelets and Fliter Banks}.
\newblock Wellesley-Cambridge Press, Wellesley, MA, 1996.

\bibitem{Takacs}
Lajos Tak\'acs.
\newblock Ballot problems.
\newblock {\em Zeitschrift f\"ur Wahrscheinlichkeitstheorie und Verwandte
  Gebiete}, 1(2):154--158, 1962.

\bibitem{TrinomialTriangle}
Eric~W. Weisstein.
\newblock Trinomial triangle.
\newblock From MathWorld--A Wolfram Web Resource.
  \url{http://mathworld.wolfram.com/TrinomialTriangle.html}.

\end{thebibliography}

\bibliographystyle{plain}

\end{document}